\newtheorem{theorem}{Theorem}
\newtheorem{corollary}{Corollary}
\newtheorem{lemma}{Lemma}
\newtheorem{definition}{Definition}
\newtheorem{remark}{Remark}
\begin{document}

\begin{center}\LARGE
%%% Insert the title of your talk here %%%

\textbf{A class of symmetric association schemes as inclusion of biplanes}

\bigskip\large

%%% Insert your first name and family name here %%%
Ivica Martinjak\\
University of Zagreb

%%% Insert your institution here %%%

%%% Insert you city and country here %%%
\end{center}

%%% Give the abstract of your talk here %%%

\begin{abstract} 
Let ${\cal B}$ be a nontrivial biplane of order $k-2$ represented by symmetric canonical incidence matrix with trace $1+ \binom{k}{2}$. We proved that ${\cal B}$ includes a partially balanced incomplete design with association scheme of three classes. Consequently, these structures are symmetric, having $2k-6$ points. While it is not known whether this class is finite or infinite, we show that there is a related superclass with infinitely many representatives. 
%While it is not known wheather this family has finitely or infinitely many memers, we show that it is a subclass of an infinite family of 
\end{abstract}

\noindent {\bf Keywords:} association scheme,  Bose-Mesner algebra, biplane, partially balanced incomplete design, automorphism group.\\
%\noindent {\bf AMS Mathematical Subject Classifications:} 05A17, 11P84

\section{Introduction}
\label{Sec1}

Let $X$ be a nonempty finite set. A {\it symmetric association scheme} ${\cal A}$ of $d$ classes on $X$ is a sequence of relations $R_0, R_1,...,R_d \subset X \times X$, satisfying 

\begin{description}
\item {\it i)} $R_0=\{(x,x): x \in X\}$ 
\item {\it ii)} $X \times X =  R_0 \cup R_1 \cup ... \cup R_d$ and $R_i \cap R_j={\o}$, $\forall i \neq j$
\item {\it iii)} $(x,y) \in R_i \Leftrightarrow (y,x) \in R_i$, $i \in\{0,1,...,d\}$
\item {\it iv)} $\forall h,i,j \in \{0,1,...,d\}$ and $\forall x,y \in X$ such that $(x,y) \in R_h$ the number $$p_{ij}^h:=|\{z \in X : (x,z) \in R_i, (z,y) \in R_j\}|$$ is well defined i.e. depends only on $h,i,j$ and not on $x$ and $y$.
\end{description}

We say that points $(x,y) \in R_i$ are {\it $i$-th associates}. The numbers $p_{ij}^h$ are the {\it intersection numbers} of ${\cal A}$. The relation $R_i$ is called $i$-th {\it associate class} and can be represented by the $i$-th {\it associate matrix} $A_i$. The matrix $A_i$ is a (0,1)-matrix whose rows and columns are indexed by the elements of $X$ and whose $(x,y)$ entry defines whether $(x,y)\in R_i$ or not. Matrices $$A_0,A_1,...,A_d$$ have several interesting properties, forming the vector space that is also a matrix algebra. This associative algebra is called the {\it Bose-Mesner algebra of ${\cal A}$}. 
Sum of these matrices equals the all-1 matrix $J_{|X|}$ of order $|X|$,
$$
\sum_i A_i=J_{|X|}.
$$
The scheme ${\cal A}$ is also represented by a matrix ${\cal R}=[r_{xy}]$ of order $|X|$, defined on the way $$r_{xy}=i \Leftrightarrow (x,y) \in R_i,$$ where $i \in \{0,1,...,d\}.$
It is worth mentioning that there are several classes of association schemes, including {\it Hamming} and {\it Johnson schemes}. 

An {\it incidence structure} is a triple ${\cal I}=({\cal P}, {\cal L}, I)$ where ${\cal P}=\{p_1,...,p_v\}$ is a set of {\it points}, the elements of a set ${\cal L}=\{L_1,...,L_b\}$ are called {\it lines} or {\it blocks} and $I \subseteq {\cal P} \times {\cal L}$ is an {\it incidence relation}. If every point is incident with the same number $r$ of lines the structure ${\cal I}$ is called {\it regular} with the degree of regularity equal to $r$, denoted $d(p)=r$. Similarly, the structure is {\it uniform}, with the degree of uniformity equal to $k$ if it holds $d(L)=k, \enspace \forall L \in {\cal L}$. Another special characteristics of an incidence structure is {\it balance}, the property of having every $t$ points on the same number $\lambda$ of lines. These properties are not independet, uniformity and balance imply regularity. The order of such a structure is defined as the difference $r-\lambda$. Naturally, an incidence structure is represented by incidence matrix, whose entries define the incidence relation $I$.

The number of points and lines of a regular and uniform structure ${\cal I}$ are related on the way 
\begin{eqnarray} \label{Rel1}
v \cdot r=b \cdot k.
\end{eqnarray}
Uniform and balanced incidence structure is uniquely determined by the 4-tuple of parameters 
\begin{eqnarray*}
t-(v,k,\lambda).
\end{eqnarray*} 
Sometimes, the longer version $t$-$(v,b,r,k,\lambda)$ is also used to define such structure. In particular, when $t=2$ it holds
$
r (k-1)=\lambda (v-1).
$
Both of the mentioned relations can be proven by means of double counting. The mentioned case $t=2$ is necessary for a remarkable possibility that structure has the same number of points and lines. In that case $r=k$ and such structures are called {\it symmetric}.

Throughout this paper we use standard notation for matrices. In addition to unit matrix $J_{m,n}$, zero and identity matrix with related dimensions are denoted by $0_{m,n}$ and $I_n$, respectively. The matrix $A^T$ is transpose of matrix $A$. Anti-diagonal (0,1)-matrix (anti-cyclic matrix) of order $n$ is denoted by $C_n^{-}$. For given $m \times n$ matrix $A$, $I \subset [m], \enspace J \subset [n]$ a submatrix with entries indexed by $I$ and $J$ is denoted $A_{I,J}$. In particular, when $J= [n]$, this index set possibly will be omited. Similarly, index $(i)$ stands for $I,J=\{i\}$.

\begin{definition}\label{Def1}
A {\it biplane} is a symmetric incidence structure having $k$ points on every of $v$ lines, where any two lines intersect in two points. 
\end{definition}

According to this definition, for biplanes holds $t=\lambda=2$ and such a structure is determined by parameters 
\begin{eqnarray*}
2-(1+\binom{k}{2},k,2).
\end{eqnarray*}

Biplanes are very regular structures with notable algebraic and combinatorial properties \cite{Cam}. In addition to incidence matrix, a biplane can also be represented as a labeled complete graph. Let ${\cal B}_{na}$, ${\cal B}_{nb}$,... be biplanes of order $n$ in respect to their {\it automorphism group order}, in ascending order.

It is known that the first $k$ rows $M_{[k]}$ of biplane's incidence matrix $M$ is uniquely determined up to isomorphism, as follows,
\begin{eqnarray}
M_{[k]}=
\begin{bmatrix}
J_{k,1} & J_{1,k-1} & 0_{1,k-2} & ... & 0_{k-2,1}
\cr  & I_{k-1} & J_{1,k-2} & ... & J_{1,1}
\cr  &  & I_{k-2} & ... & I_1
\end{bmatrix}.
\end{eqnarray}
Furthermore, the first $k$ columns is the transpose of $M_{[k]}$, without loosing generality up to isomorphism. We say that such incidence matrix $M$ of a biplane is in {\it canonical form}. 

\begin{definition}
A {\it partially balanced incomplete design} with $m$ {\it associate classes} ${\cal D}(d)$ is a balanced and uniform incidence structure with $v$-set ${\cal P}$ of points and $b$-set ${\cal B}$ of lines. Any two points that are $i$-th associates appear togather in $\lambda_i$ lines, $i \in \{0,1,...,d\}$. Point $p \in {\cal P}$ is $i$-th associate with $n_i$ points $p' \in {\cal P}$. 
\end{definition}

This definition allows cases $\lambda_i=0$ and $\lambda_i=\lambda_j$. The numbers $v,b,r,k, \lambda_i, \enspace 1 \leq i\leq d$  are parameters  of a partially incomplete design with $d$ associate classes ${\cal D}(d)$. These parameters, usually written as 5-tuple 
\begin{eqnarray*}
2-(v,b,r,k,\lambda_i)
\end{eqnarray*}
satisfy relation \ref{Rel1} as well as relation 
\begin{eqnarray} \label{RelPBD}
\sum_i n_i\lambda_i =r(k-1).
\end{eqnarray}

Recall that a partially balanced incomplete design determines an association scheme, but the converse is not true. In this paper we follow the usuall terminology and use the common name for these two structures. In the same manner we assume that a scheme is symmetric unless otherwise specified, and this is always the case in what follows.

\section{The main result}

We prove that an incidence matrix of a biplane that is symmetric and have the trace equal to the number of points defines a partially incomplete design with 3 associate classes. Design ${\cal D}(3)$ is a subset of such a biplane. Our proof is provided in two steps, corresponding to relations \ref{Rel1} and \ref{RelPBD}. 

When a biplane's incidence matrix $M$ have at least first $3k-5$ entries on the main diagonal equal to 1, then there is a submatrix of $M$ with row and column sum equal to $3$. More precisely, a principal submatrix with this property is $M_{S}$, $$S=\{k+2,...,3k-5\}.$$  This fact, that follows because of the constraint to the third (second) canonical row (column) and the oposite, is expressed in the next lemma. Namely, according to Definition \ref{Def1} the scalar product of any two rows of a biplane's incidence matrix is equal to 2. We also say that any two rows have two 1s in common. The same holds for any two columns

\begin{lemma} \label{Lem1}
Let $M$ be a canonical incidence matrix of a biplane of order $k-2$, with $M[i,i]=1, \enspace i=1,...,3k-5$. Then,
\begin{eqnarray}
\sum_{i=k+2}^{3k-5} M[i,j]= \sum_{i=k+2}^{3k-5} M[j,i]= 3,
\end{eqnarray}
where $j=k+2,...,3k-5$.
\end{lemma}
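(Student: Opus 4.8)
The plan is to first decode the canonical form of $M$ into combinatorial language. Reading off the block matrix $M_{[k]}$, column $1$ is the point $p_1$, lying on exactly the first $k$ lines; columns $2,\ldots,k$ are points $p_2,\ldots,p_k$; and the remaining $\binom{k-1}{2}$ columns $k+1,\ldots,v$ are the ``pair points'', the second common point of lines $\ell$ and $\ell'$ for $2\le\ell<\ell'\le k$, listed lexicographically. In this reading a pair point $\{\ell,\ell'\}$ plays the role of an edge of the complete graph on the vertex set $\{2,\ldots,k\}$, and line $\ell$ (for $2\le\ell\le k$) is incident precisely with $p_1$, $p_\ell$ and the edges meeting the vertex $\ell$. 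The lexicographic order puts $\{2,3\},\ldots,\{2,k\}$ in columns $k+1,\ldots,2k-2$ and $\{3,4\},\ldots,\{3,k\}$ in columns $2k-1,\ldots,3k-5$; hence $S=\{k+2,\ldots,3k-5\}$ is exactly the set of edges joining a vertex of $\{2,3\}$ to a vertex of $\{4,\ldots,k\}$, a set of cardinality $2k-6$.

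Since $M$ is symmetric, column $j$ and row $j$ share the same support, so it suffices to count, for each pair line $L_j$ with $j\in S$, the number of pair points of $S$ incident with it; this yields both sums in the statement at once, the ``opposite'' direction being the mirror computation on columns. I will pin down these incidences through the two-intersection property of Definition \ref{Def1}. A line whose index exceeds $k$ avoids $p_1$, so it meets line $1=\{p_1,\ldots,p_k\}$ in two points, and for $L_j=L_{ab}$ these are forced to be $p_a,p_b$. Intersecting $L_{ab}$ with each line $c\in\{2,\ldots,k\}$ and using $p_1\notin L_{ab}$ then shows that $L_{ab}$ carries exactly one edge at each of its defining vertices $a,b$ and exactly two edges at every other vertex.

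For a type-two index $j$, namely $L_{2a}$ with $a\in\{4,\ldots,k\}$, the single edge at vertex $2$ is identified by the diagonal hypothesis: $M[j,j]=1$ forces the pair point $\{2,a\}$ onto $L_{2a}$, and being the unique edge at vertex $2$ it must equal $\{2,a\}\in S$. This is the crux: because $\{2,a\}\neq\{2,3\}$, the edge $\{2,3\}$ is absent from $L_{2a}$, so both edges of $L_{2a}$ at vertex $3$ have the form $\{3,x\}$ with $x\in\{4,\ldots,k\}$ and hence also lie in $S$. Counting the $S$-edges of $L_{2a}$ gives $1$ at vertex $2$ plus $2$ at vertex $3$, with none counted twice since only $\{2,3\}$ meets both vertices and it is excluded; the sum is therefore $3$. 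The type-three indices $L_{3a}$ are handled by the mirror argument: the diagonal hypothesis forces $\{3,a\}\in L_{3a}$, again excludes $\{2,3\}$, and leaves two $S$-edges at vertex $2$ and one at vertex $3$, totalling $3$.

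The main obstacle I anticipate is not the final count but the preliminary structural bookkeeping: translating the compressed block form and its lexicographic column order precisely, and justifying that each pair line carries exactly two edges at each non-defining vertex and exactly one at each defining vertex. Once those facts are in place, together with the observation that the diagonal hypothesis is exactly what forbids the ``short'' edge $\{2,3\}$ from appearing, the row and column sums collapse to $3$, and the symmetry of $M$ delivers the two equalities of the statement simultaneously.
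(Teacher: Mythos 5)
Your count of the row sums is correct and is essentially the paper's own argument: you intersect a pair line $L_{ab}$ with the canonical lines $1$, $2$, $3$, conclude that $L_{ab}$ carries exactly one edge at each defining vertex and two at every other vertex, and then use the diagonal hypothesis $M[j,j]=1$ to identify the unique edge at the defining vertex --- which is precisely what excludes the edge $\{2,3\}$ and forces all three remaining incidences of $L_j$ into $S$. This is the same mechanism the paper uses (the constraints toward the second and third canonical rows together with the diagonal condition), worked out in more detail and in cleaner language than the paper's version.

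The gap is in your treatment of the column sums. You justify them by writing ``Since $M$ is symmetric, column $j$ and row $j$ share the same support,'' but symmetry is not a hypothesis of Lemma \ref{Lem1}: the lemma assumes only that $M$ is canonical and that $M[i,i]=1$ for $i=1,\dots,3k-5$. Symmetry enters only in Theorem \ref{Thm2}, and the lemma is genuinely meant to apply to non-symmetric matrices --- Section 3 invokes its sum condition for biplanes of order $7$, which admit no symmetric canonical incidence matrix at all (Figure \ref{Fig1}). The statement remains true without symmetry, and the repair is what the paper compresses into ``analogue reasoning for dual rows and columns'': the canonical form is self-dual, since the first $k$ columns of $M$ are $M_{[k]}^T$, so the transpose $M^T$ is again a canonical incidence matrix of a biplane (the dual) satisfying the same diagonal condition, and your row count applied verbatim to $M^T$ yields the column sums of $M$. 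Your phrase ``the mirror computation on columns'' points in this direction, but as written the second equality rests on an assumption the lemma does not grant; replace the appeal to symmetry by this duality argument and the proof is complete.
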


\begin{proof}
A row $$M_{(i),[v]}\enspace i=k+2,...,2k-2$$ has no one 1s in common on the first $2k-2$ position with the third canonical row $M_{(3),[v]}$ Since the last 1s in $M_{(3),[v]}$ is on $(3k-5)$-th position, $M_{(i),[v]}$ has exactly two 1s on positions $k+2,..., 3k-5$. This two 1s with the diagonal one gives the row-sum 3. 
Columns $$M_{[v],(j)},\enspace j=2k-2,...,3k-5$$ have the analogue constraint towards the second canonical column $M_{[v],(2)}$, meaning that the related column-sum is also 3. 

Analogue reasoning worth for dual rows and columns, which completes the proof.
\end{proof}

The matrix $L_m$ is defined on the way that positions with entries equal to 1 are as follows, while the other entries are 0,
$$
\qquad
L_{n}=\begin{bmatrix}
1 & 1 & &  &
\cr   1& & 1 & & 
\cr   & &  \ddots& & 
\cr   &   &1 & &1
\cr   &   && 1&1
\end{bmatrix}. 
$$

\begin{lemma}\label{Lem2}
Let $A$ be a (0,1)-matrix of order $m$ with row and column sum 2 and without a submatrix $J_2$. Then for every row $A_{(i)}$ there are exactly two other rows having scalar product with $A_{(i)}$ equal to $1$, while with the rest of $m-3$ vectors this product is $0$.
\end{lemma}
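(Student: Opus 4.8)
The plan is to exploit the observation that a row with only two nonzero entries has a scalar product against any other row determined entirely by those two columns, and then to let the forbidden-$J_2$ hypothesis together with the column-sum condition carry out the counting.

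First I would fix an arbitrary row $A_{(i)}$ and record the two columns, say $a$ and $b$, in which its two 1s sit. For any other row $A_{(i')}$ with $i' \neq i$, every term of the scalar product $\sum_j A[i,j]A[i',j]$ vanishes except those indexed by $a$ and $b$, so the product equals $A[i',a] + A[i',b]$, a value in $\{0,1,2\}$. Next I would rule out the value $2$ for $i' \neq i$: a product of $2$ forces $A[i',a]=A[i',b]=1$, whence rows $i,i'$ together with columns $a,b$ form an all-ones $2\times 2$ block, contradicting the assumption that $A$ has no submatrix $J_2$. Thus each distinct row meets $A_{(i)}$ in scalar product $0$ or $1$.

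Then I would use the column-sum condition to locate exactly the rows scoring $1$. Column $a$ has sum $2$ and already carries a $1$ in row $i$, so precisely one further row $i_1 \neq i$ satisfies $A[i_1,a]=1$; likewise exactly one row $i_2 \neq i$ has $A[i_2,b]=1$. A second appeal to the no-$J_2$ hypothesis gives $i_1 \neq i_2$, since $i_1=i_2$ would make a single row share both columns $a$ and $b$ with row $i$ and again produce a $J_2$. Consequently $A_{(i_1)}$ has scalar product $1+0=1$ and $A_{(i_2)}$ has $0+1=1$ with $A_{(i)}$, yielding exactly two rows with product $1$, while every remaining row $i' \notin \{i,i_1,i_2\}$ has $A[i',a]=A[i',b]=0$ and hence scalar product $0$. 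This leaves precisely $m-3$ such rows, completing the count.

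I do not anticipate a genuine obstacle here; the entire argument rests on the single structural fact that the no-$J_2$ condition simultaneously caps each pairwise product at $1$ and forces the two partner rows $i_1,i_2$ to be distinct. The only care required is to keep the bookkeeping of excluded rows consistent and to note that the symmetric statement about columns follows by applying the same reasoning to $A^T$, which also has row and column sum $2$ and contains no $J_2$.
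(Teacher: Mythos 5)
Your proof is correct, but it takes a genuinely different route from the paper. The paper argues by classification: it asserts that, up to row and column permutations, the \emph{only} $(0,1)$-matrix of order $m$ with all row and column sums $2$ and no $J_2$ submatrix is the path-with-loops matrix $L_m$, reads off the scalar products from that single matrix, and notes that permutations preserve pairwise scalar products. Your argument is purely local: you fix a row, observe that its scalar product with any other row equals the sum of that row's entries in the two supporting columns $a,b$, cap this sum at $1$ using the no-$J_2$ hypothesis, and then use the column-sum condition to produce exactly two partner rows $i_1 \neq i_2$. Notably, your route is not only different but safer: the paper's uniqueness claim is false as stated, since the hypotheses are also satisfied by block-diagonal matrices whose associated bipartite graphs are disjoint unions of cycles of length at least $6$ (for instance, when $m=6$, a direct sum of two $3\times 3$ cyclic blocks), and such matrices are not permutation-equivalent to $L_6$. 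The conclusion of the lemma still holds for them --- which your local argument establishes, whereas the paper's classification step silently excludes these cases. What the paper's approach buys is an explicit normal form $L_m$ that it reuses later (in the proof of the main theorem, via the matrix $D_{k-3}$), but that normal form is not actually forced by the hypotheses of this lemma alone; your bookkeeping argument proves the stated conclusion in full generality.
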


\begin{proof}
Up to row and column permutation there is the only one matrix $A$, $$A=L_m.$$ 
Aparently, every row of $A$ has entry equal  to 1 on one common position with exactly two other rows, while with the rest of $m-3$ vectors scalar product is $0$. Permutations of rows and columns preserves scalar product among rows, which completes the proof.
\end{proof}

\begin{theorem} \label{Thm2}
Let ${\cal B}$ be a nontrivial biplane of order $k-2$, represented by symmetric incidence matrix $M$ having the trace equal to the number of points. Then the principal submatrix $M_S$ represents a partially balanced incomplete design with assciation scheme of $3$ classes ${\cal D}(3)$. 
\end{theorem}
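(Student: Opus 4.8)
The plan is to read the structure of the principal submatrix $M_S$ off the two preceding lemmas and then exhibit the three associate classes explicitly, verifying the scheme axioms and the design relations at the end. First I would exploit the trace hypothesis: since $M$ is a $(0,1)$-matrix, a trace equal to the number of points forces $M[i,i]=1$ for every $i$, in particular for $i=1,\dots,3k-5$. Thus the hypothesis of Lemma \ref{Lem1} is met and $M_S$, with $S=\{k+2,\dots,3k-5\}$ and $|S|=2k-6$, has all row and column sums equal to $3$. Because $M$ is symmetric with unit diagonal, so is $M_S$, and hence $A:=M_S-I$ is a symmetric $(0,1)$-matrix of order $2k-6$ with zero diagonal and row and column sum $2$, i.e. the adjacency matrix of a $2$-regular graph. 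This already gives the symmetry of the structure and relation \ref{Rel1}, since $v=b=2k-6$ while the replication number and block size are both $3$.

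Next I would bring in the biplane condition. By Definition \ref{Def1} any two rows of $M$ meet in exactly two common $1$'s, so after restricting to the coordinates in $S$ and deleting the diagonal no two rows of $A$ can share two $1$-positions; that is, $A$ has no submatrix $J_2$. Lemma \ref{Lem2} then applies: up to permutation $A=L_{2k-6}$, a cyclic structure of girth exceeding $4$ in which every vertex has scalar product $1$ with exactly two other vertices and $0$ with the remaining $2k-9$. I would use this to define the relations on $X=S$: $R_0$ the identity; $R_1=\{(x,y):A[x,y]=1\}$, the two graph-neighbours, so $n_1=2$; $R_2=\{(x,y):x\neq y,\ (A^2)[x,y]=1\}$, the two vertices at distance two singled out by Lemma \ref{Lem2}, so $n_2=2$; and $R_3$ all remaining pairs, so $n_3=2k-11$. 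The girth being at least five keeps $R_1$ and $R_2$ disjoint, since adjacent vertices then have no common neighbour.

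To pin down the design parameters I would compute the concurrence matrix $M_S^2=I+2A+A^2$ from $M_S=I+A$. Reading entries off the girth structure yields $(M_S^2)[x,x]=3$ and off-diagonal values $2,1,0$ on $R_1,R_2,R_3$ respectively, so $\lambda_1=2$, $\lambda_2=1$, $\lambda_3=0$; thus any two $i$-th associates lie together in exactly $\lambda_i$ blocks, which is the balance condition for a partially balanced design. Relation \ref{RelPBD} then follows at once, since
\begin{eqnarray*}
\sum_i n_i\lambda_i = 2\cdot 2 + 2\cdot 1 + (2k-11)\cdot 0 = 6,
\end{eqnarray*}
which is the right-hand side $r(k-1)$ of \ref{RelPBD} for replication number $3$ and block size $3$. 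Axioms (i)--(iii) of a symmetric association scheme hold by construction: $R_0$ is the diagonal, the $R_i$ partition $X\times X$, and each associate matrix $A_i$ is symmetric.

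The main obstacle is axiom (iv): showing the intersection numbers $p_{ij}^h$ are well defined, equivalently that $\langle I,A_1,A_2,A_3\rangle$ is closed under multiplication and so is the Bose--Mesner algebra of the scheme. The delicate case is $R_3$, which lumps together all ``far'' pairs, so well-definedness of the $p_{ij}^3$ forces the graph $L_{2k-6}$ to be uniform enough that these counts cannot depend on the chosen pair. I would settle this by using the explicit cyclic form $A=L_{2k-6}$ from Lemma \ref{Lem2} to fix the component structure and then verifying the relations $A_iA_j=\sum_h p_{ij}^h A_h$ by a direct count of length-two configurations, the key computations being $A_1^2$ and $A_1A_2$ expressed through $I,A_1,A_2,A_3$. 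Once closure is confirmed, the four matrices span a commutative algebra containing $J_{2k-6}=\sum_i A_i$, so $M_S$ realizes a partially balanced incomplete design with a $3$-class association scheme, as claimed.
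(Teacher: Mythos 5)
Your overall skeleton does track the paper's proof: Lemma \ref{Lem1} gives row and column sums $3$ (hence $v=b=2k-6$, $r=k=3$), Lemma \ref{Lem2} controls the scalar products, and the three classes are read off the concurrences $2,1,0$; writing $M_S=I+A$ with $A$ the adjacency matrix of a $2$-regular graph is a clean equivalent of the paper's block matrix $D_{k-3}$. But two of your steps are unjustified or false. First, your deduction that $A$ has no $J_2$ submatrix is a non sequitur for non-adjacent pairs: deleting the diagonal removes common $1$s only in columns $i$ and $j$, and by the biplane condition together with the unit diagonal those are the two common $1$s of rows $i,j$ precisely when $M[i,j]=1$ (this part correctly excludes triangles). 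When $M[i,j]=0$, nothing you have said prevents both common $1$s of rows $i$ and $j$ from lying inside $S$, i.e.\ a $4$-cycle in $A$; this case must be excluded, since the four vertices of a $4$-cycle would each have three concurrence-$2$ partners, destroying the constancy of $n_i$. Ruling it out needs the constraints coming from the canonical border of $M$ (the paper buries this in its unproved assertion that $M_S$ equals $D_{k-3}$ up to permutations). Second, ``up to permutation $A=L_{2k-6}$'' is false: $A$ has zero diagonal while $L_m$ has two diagonal $1$s, and Lemma \ref{Lem2} cannot fix the component structure --- $A$ is a disjoint union of cycles of undetermined lengths. In the paper's own $16$-point example from ${\cal B}_{9e}$ (Figure \ref{Fig2}), $A=A_3$ is a union of two $8$-cycles, not a single $16$-cycle, so any argument leaning on a single-cycle normal form is unsound.

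The fatal problem is your last step. The closure of $\langle I,A_1,A_2,A_3\rangle$ under multiplication --- equivalently, well-definedness of all $p_{ij}^h$ --- is not something you can verify, because it is false once the point count exceeds $7$. With $R_3$ (your labelling) the class of all pairs at distance $\ge 3$ or in distinct components, count the $z$ with $(x,z)\in R_1$ and $(z,y)\in R_2$: this count is $1$ when $d(x,y)=3$, but $0$ when $d(x,y)=4$ and $0$ when $x,y$ lie in different cycles, so $p_{12}^3$ is not well defined. Concretely, in the paper's $16$-point example the vertex $1$ has neighbours $15,16$; for $y=13$ (distance $3$) the vertex $z=15$ qualifies, while for $y=4$ (distance $4$) no neighbour of $1$ is at distance $2$ from $y$. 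So the ``direct count of length-two configurations'' you defer to the end cannot succeed. Note that the paper never attempts it: its definition of a partially balanced design asks only that the $n_i$ and the $\lambda_i$ be constant, and its proof (Lemma \ref{Lem2} applied to the two halves of $S$, plus the asserted $D_{k-3}$ form) checks exactly that and stops. Your proposal becomes a correct proof of what the paper actually establishes only if you delete the Bose--Mesner closure step and read ``association scheme'' in that weaker sense; as written, it contains a step that would fail.
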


\begin{proof} The condition expressed by relation \ref{Rel1} is provided by Lemma \ref{Lem1}. It remains to prove that numbers $n_i$ do not depend on the choise of a point and to prove that relation \ref{RelPBD} holds (with $k=r=3$). More precisely, the statement that every point is 
\begin{description}
\item {\it i)} $1$st associate with $2k-11$ other points,
\item {\it ii)} $2$nd associate with 2 other point,
\item {\it iii)} $3$rd associate with 2 other point, 
\end{description}
proves the theorem. In that case 
\begin{eqnarray}
\label{AScheme_n1}
n_1&=&2k-11\\ 
n_2=n_3&=&2
\end{eqnarray}
and 
\begin{eqnarray}
\lambda_1&=&0\\
\lambda_2&=&1\\
\lambda_3&=&2.
\end{eqnarray}

According to Lemma \ref{Lem2} for a row $M_{S(i)}$, there are two rows $M_{S(j)}$, such that $$M_{S(i)} \cdot M_{S(j)}=1, \enspace i,j=1,...,k-3, \enspace i \neq j,$$
while the scalar products with the rest $k-6$ rows is $0$. The same holds when $ i,j=k-2,...,2k-6$ with $i \neq j$.

In the final step of the proof we demonstrate that $$M_{S(i)}, \enspace i=1,2,...,k-3$$ has scalar product 2 with exactly two rows $$M_{S(j)}, \enspace j=k-2,...,2k-6$$ while the other intersections with these rows are $0$, when $M_S$ is symmetric. Clearly, then the same holds for interchanged values of $i$ and $j$. Consequently, every row of $M_S$ has scalar product 1 with two other rows, scalar product 2 also with two rows and $0$ with the rest of rows.

Define the matrix $D_n$ as follows,

\begin{eqnarray}
D_{n}=
\begin{bmatrix}
I_{n} & L_{n}
\cr L_{n} & I_{n}
\end{bmatrix}
\end{eqnarray}
Now, the matrix $D_{k-3}$ equals $M_S$ up to row and column permutations. Obviously, this matrix holds the declared scalar products among rows. 

The fact that any finite sequence of row and column permutations of $D_{k-3}$ preserving all 1s on the main diagonal of $M_S$ also preserves the diagonal symmetry completes the proof.
\end{proof}

\begin{remark}
Apart of the trivial biplane (the one of order 1), no one biplane of order $k-2$, $k<6$ admits symmetric canonical incidence matrix. Thus, the right hand side of the relation \ref{AScheme_n1} is at least 1.
\end{remark}

Note that in the proof of theorem, all arguments related to rows hold for columns as well. This means that ${\cal D}(3)$ has equal number of points and lines. Equivalently, the degree of its unifomity $k$ equals the degree of regularity $r$. Because of the same fact, the related association scheme is also symmetric. 

\begin{corollary}
Partially balanced incomplete design with association scheme of three classes ${\cal D}(3)$ is symmetric.
\end{corollary}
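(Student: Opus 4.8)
The plan is to obtain symmetry of ${\cal D}(3)$ as an immediate consequence of the matrix $M_S$ constructed in the proof of Theorem \ref{Thm2}, rather than by recounting parameters from scratch. Recall that, by the discussion preceding Definition \ref{Def1}, a balanced and uniform incidence structure is called \emph{symmetric} exactly when it has the same number of points as lines; in that case relation \ref{Rel1}, namely $v\cdot r = b\cdot k$, forces the degree of regularity $r$ and the degree of uniformity $k$ to coincide. Since the points of ${\cal D}(3)$ are indexed by the rows of $M_S$ and its lines by the columns of $M_S$, it suffices to show that $M_S$ is a square matrix and that its row sums agree with its column sums.

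First I would note that $M_S$ is the principal submatrix of $M$ cut out by the common index set $S=\{k+2,\dots,3k-5\}$, and that by the hypothesis of Theorem \ref{Thm2} the ambient matrix $M$ is symmetric. A principal submatrix of a symmetric matrix is again symmetric, so $M_S^{T}=M_S$; in particular $M_S$ is square of order $|S|=2k-6$. Reading this off as an incidence structure gives $v=b=2k-6$, i.e. ${\cal D}(3)$ has equally many points and lines. The equality $r=k$ then follows in either of two ways: directly from Lemma \ref{Lem1}, which makes every row sum and every column sum of $M_S$ equal to $3$ so that $r=k=3$; or, more abstractly, from $M_S^{T}=M_S$, which makes the $i$-th row sum equal to the $i$-th column sum and hence $r=k$ without reference to the explicit value. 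Combined with $v=b$, this is precisely the definition of a symmetric incidence structure.

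I expect no genuine obstacle here, since the statement is essentially the remark immediately preceding it made precise; the content of the corollary is already latent in the closing observation of Theorem \ref{Thm2} that every argument about rows transfers verbatim to columns. The only point meriting care is the bookkeeping that inheritance of symmetry by the principal submatrix $M_S$ is exactly what converts that row--column interchangeability into the design-theoretic identities $v=b$ and $r=k$, and that the diagonal-preserving permutations used at the end of the proof of Theorem \ref{Thm2} keep $M_S$ symmetric so that this argument applies to the actual representative and not merely to the normal form $D_{k-3}$.
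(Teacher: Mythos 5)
Your proof is correct and takes essentially the same approach as the paper: the paper's own justification is the terse remark that all row arguments in the proof of Theorem \ref{Thm2} transfer verbatim to columns, which is exactly your observation that $M_S$ inherits symmetry from $M$, giving $v=b=2k-6$ and $r=k$. Your version merely makes the paper's bookkeeping explicit (principal submatrices of symmetric matrices are symmetric, and row sums equal column sums equal $3$ by Lemma \ref{Lem1}), so there is no substantive difference.
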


Thus, a class of partially balanced incomplete designs and association schemes is naturally associated with biplanes. It follows yet another immediate corollary of Theorem \ref{Thm2}.
\begin{corollary}
There is a family of partially balanced incomplete designs with three associate classes ${\cal D}(3)$, with parameters
\begin{eqnarray} \label{NewClass}
2-(2k-6,3,\lambda_i), \enspace \lambda_i=0,1,2,
\end{eqnarray}
where $k-2$ is the order of a nontrivial biplane admitting symmetric canonical incidence matrix with trace equal to the number of points.
\end{corollary}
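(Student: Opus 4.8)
The plan is to obtain this statement as a direct consequence of Theorem \ref{Thm2}, reading the parameter tuple off the data already assembled in its proof. Theorem \ref{Thm2} guarantees that, for every nontrivial biplane of order $k-2$ whose symmetric canonical incidence matrix has trace equal to the number of points, the principal submatrix $M_S$ represents a partially balanced incomplete design with three associate classes. Thus the only remaining work is to exhibit the parameters $v,r$, the block size, and the $\lambda_i$ explicitly, and then to observe that admissible orders collectively yield a whole family of such designs.

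First I would count the points. Since $S=\{k+2,\ldots,3k-5\}$, the design has $v=|S|=(3k-5)-(k+2)+1=2k-6$ points. Next, the block size is read from Lemma \ref{Lem1}: each column sum of $M_S$ equals $3$, so every line is incident with exactly three points, giving block size $3$; by the diagonal-symmetry argument closing the proof of Theorem \ref{Thm2} the row sums agree, whence $r=3$ and the design is symmetric with $b=v=2k-6$. Finally, the association data established inside the proof of Theorem \ref{Thm2}, namely $\lambda_1=0$, $\lambda_2=1$, $\lambda_3=2$, supplies the values $\lambda_i=0,1,2$. Matching these quantities to the slots of the short 4-tuple $t$-$(v,k,\lambda_i)$ produces exactly the tuple $2-(2k-6,3,\lambda_i)$ with $\lambda_i=0,1,2$ claimed in \ref{NewClass}.

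To pass from a single design to a family, I would let $k$ range over all integers for which a nontrivial biplane of order $k-2$ admitting a symmetric canonical incidence matrix with trace equal to the number of points exists. Each admissible $k$ produces, via Theorem \ref{Thm2}, one design of the stated shape, and distinct admissible orders give designs on point sets of different cardinality $2k-6$; hence the totality of these designs constitutes the asserted family. By the Remark following Theorem \ref{Thm2}, the right-hand side $2k-11$ of \ref{AScheme_n1} is at least $1$, so no admissible $k$ is smaller than $6$; the family is therefore indexed by $k\ge 6$, with parameter $2k-6\ge 6$.

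I expect no serious obstacle, since the corollary is essentially a restatement of Theorem \ref{Thm2} in parameter form. The only points that demand care are the bookkeeping that assigns $|S|$, the common row/column sum, and the three intersection multiplicities to the correct slots of the parameter tuple, and the verification that the indexing set of admissible orders is nonempty so that the family is not vacuous; the latter is exactly what the cited Remark secures.
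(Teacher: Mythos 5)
Your proposal is correct and takes essentially the same approach as the paper, which offers no separate proof and treats this corollary as immediate from Theorem \ref{Thm2}: one reads off $v=|S|=2k-6$, block size and replication $3$ from Lemma \ref{Lem1}, and $\lambda_i=0,1,2$ from the theorem's proof, then lets $k$ range over admissible orders (at least $6$ by the Remark). The only slight inaccuracy is attributing nonemptiness of the family to that Remark, which merely bounds admissible $k$ from below; actual existence of representatives is witnessed by the explicit constructions in Section 3, such as the design arising from ${\cal B}_{4c}$.
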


We are aware of the fact that even weaker constraint on the trace (see Lemma \ref{Lem1}) leads to the family \ref{NewClass}. However, trace equal to the number of points is a notable property of biplane's canonical incidence matrices \cite{Mar}. 

As an illustration of these results, designs and association schemes ${\cal D}(3)$ with 6 and 16 points are constructed in this work, as it is presented in the next section.

\section{Some representatives}

The matrix $B'$ togather with $M_{[6]}$ and $M_{[6]}^T$, is an incidence matrix of the biplane ${\cal B}_{4c}$. The order of ${\cal B}_{4c}$ is $4$  and the automorphism group order is $|Aut({\cal B}_{4c})| =11520$.

\begin{eqnarray*}
B&=&\begin{bmatrix}
0_{1,3} & J_{1,3}
\cr L_3C_3^{-} & C_3^{-}
\end{bmatrix},\\
B'&=& \begin{bmatrix}
I_4 & B
\cr B^T & I_6+C_6^{-}
\end{bmatrix}.
\end{eqnarray*}

This incidence matrix defines ${\cal D}'(3)$ with $6$ points and $6$ lines. Related submatrix $M'_S$ is an incidence matrix of ${\cal D}'(3)$. Parameters of ${\cal D}'(3)$ are $2$-$(6,6,3,3,\lambda_i)$, or shortly 
\begin{eqnarray*}
2-(6,3,\lambda_i), \enspace  \lambda_i=0,1,2.
\end{eqnarray*} 
According to Theorem \ref{Thm2} it is always the case for a partially balanced incomplete design with association scheme arising from biplane's symmetric canonical incidence matrix having the trace $v$ that the other parameters are $\lambda_1=0, \enspace \lambda_2=1, \enspace \lambda_3=2$ and $n_2=n_3=2$. In this particular case $$n_1=2k-11=1.$$ In other words, the point 1 togather with the point 2 is incident with $1$ line. The same holds for points 1 and 3. Furthermore, point 1 togather with point 4 is not incident with any line. The third case is when 1 togather with some another point is incident with $2$ lines. This worth for pairs $(1,5)$ and $(1,6)$. The equal intersection statistics holds for every of the 5 other points. 

Related association scheme ${\cal A}'$ with 3 classes is represented by the matrix ${\cal R}'$, as follows on the table:

\begin{center}
\begin{tabular}{cccccc}
0&2&2&1&3&3\\
2&0&2&3&1&3  \\
2& 2& 0& 3& 3&1 \\
1& 3& 3& 0& 2&2 \\
3&1 &3 &2 &0 &2 \\
3&3 &1 &2 &2 &0 \\
\end{tabular}
\end{center}

On the other hand, ${\cal A}'$ is represented by matrices $A_i$, $i=0,1,2,3$,
\begin{eqnarray*}
A_0&=&I_6\\
A_1&=&\begin{bmatrix}
0_3 & I_3\\
I_3 & 0_3
\end{bmatrix}\\
A_2&=&\begin{bmatrix}
L_3C_3^{-} &  0_3 \\
 0_3 & L_3C_3^{-}
\end{bmatrix}\\
A_3&=&\begin{bmatrix}
0_3 & L_3C_3^{-}\\
L_3C_3^{-} & 0_3
\end{bmatrix}.
\end{eqnarray*}

Biplanes of the next order 7 do not admit symmetric canonical incidence matrix \cite{Mar}, so no one principal submatrix $M_S$ represents a partially balanced incomplete design. However, we found interesting constelations among these submatrices. Moreover, some of them satisfy sum condition from Lemma \ref{Lem1} (see the left table on Figure \ref{Fig1}, with $\cdot$ instead of 0). There are some submatrices whose the only exception of this condition are starting and ending both rows and columns (example on the right table, Figure \ref{Fig1}). We also found many $M_S$ that are symmetric in respect to the minor diagonal.
\begin{figure}
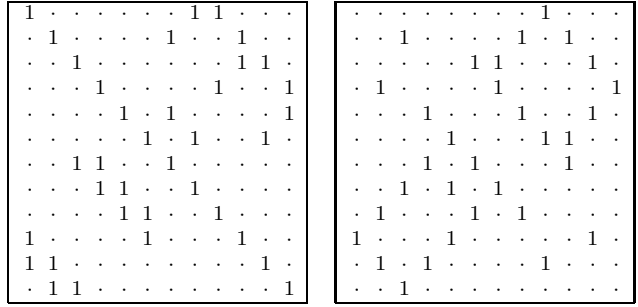

\begin{scriptsize}
\begin{center}
\begin{tabular}{|c@{\hspace{0.55em}}c@{\hspace{0.55em}}c@{\hspace{0.55em}}c@{\hspace{0.55em}}c@{\hspace{0.55em}}c@{\hspace{0.55em}}c@{\hspace{0.55em}}c@{\hspace{0.55em}}c@{\hspace{0.55em}}c@{\hspace{0.55em}}c@{\hspace{0.55em}}c@{\hspace{0.55em}}|c@{\hspace{0.55em}}|c@{\hspace{0.55em}}c@{\hspace{0.55em}}c@{\hspace{0.55em}}c@{\hspace{0.55em}}c@{\hspace{0.55em}}c@{\hspace{0.55em}}c@{\hspace{0.55em}}c@{\hspace{0.55em}}c@{\hspace{0.55em}}c@{\hspace{0.55em}}c@{\hspace{0.55em}}c@{\hspace{0.55em}}|} \cline{1-12} \cline{14-25}
1&	$\cdot$&	$\cdot$&	$\cdot$&	$\cdot$&	$\cdot$&	$\cdot$&	1&	1&	$\cdot$&	$\cdot$&	$\cdot$&	&	$\cdot$&	$\cdot$&	$\cdot$&	$\cdot$&	$\cdot$&	$\cdot$&	$\cdot$&	$\cdot$&	1&	$\cdot$&	$\cdot$&	$\cdot$\\
$\cdot$&	1&	$\cdot$&	$\cdot$&	$\cdot$&	$\cdot$&	1&	$\cdot$&	$\cdot$&	1&	$\cdot$&	$\cdot$&	&	$\cdot$&	$\cdot$&	1&	$\cdot$&	$\cdot$&	$\cdot$&	$\cdot$&	1&	$\cdot$&	1&	$\cdot$&	$\cdot$\\
$\cdot$&	$\cdot$&	1&	$\cdot$&	$\cdot$&	$\cdot$&	$\cdot$&	$\cdot$&	$\cdot$&	1&	1&	$\cdot$&	&	$\cdot$&	$\cdot$&	$\cdot$&	$\cdot$&	$\cdot$&	1&	1&	$\cdot$&	$\cdot$&	$\cdot$&	1&	$\cdot$\\
$\cdot$&	$\cdot$&	$\cdot$&	1&	$\cdot$&	$\cdot$&	$\cdot$&	$\cdot$&	1&	$\cdot$&	$\cdot$&	1&	&	$\cdot$&	1&	$\cdot$&	$\cdot$&	$\cdot$&	$\cdot$&	1&	$\cdot$&	$\cdot$&	$\cdot$&	$\cdot$&	1\\
$\cdot$&	$\cdot$&	$\cdot$&	$\cdot$&	1&	$\cdot$&	1&	$\cdot$&	$\cdot$&	$\cdot$&	$\cdot$&	1&	&	$\cdot$&	$\cdot$&	$\cdot$&	1&	$\cdot$&	$\cdot$&	$\cdot$&	1&	$\cdot$&	$\cdot$&	1&	$\cdot$\\
$\cdot$&	$\cdot$&	$\cdot$&	$\cdot$&	$\cdot$&	1&	$\cdot$&	1&	$\cdot$&	$\cdot$&	1&	$\cdot$&	&	$\cdot$&	$\cdot$&	$\cdot$&	$\cdot$&	1&	$\cdot$&	$\cdot$&	$\cdot$&	1&	1&	$\cdot$&	$\cdot$\\
$\cdot$&	$\cdot$&	1&	1&	$\cdot$&	$\cdot$&	1&	$\cdot$&	$\cdot$&	$\cdot$&	$\cdot$&	$\cdot$&	&	$\cdot$&	$\cdot$&	$\cdot$&	1&	$\cdot$&	1&	$\cdot$&	$\cdot$&	$\cdot$&	1&	$\cdot$&	$\cdot$\\
$\cdot$&	$\cdot$&	$\cdot$&	1&	1&	$\cdot$&	$\cdot$&	1&	$\cdot$&	$\cdot$&	$\cdot$&	$\cdot$&	&	$\cdot$&	$\cdot$&	1&	$\cdot$&	1&	$\cdot$&	1&	$\cdot$&	$\cdot$&	$\cdot$&	$\cdot$&	$\cdot$\\
$\cdot$&	$\cdot$&	$\cdot$&	$\cdot$&	1&	1&	$\cdot$&	$\cdot$&	1&	$\cdot$&	$\cdot$&	$\cdot$&	&	$\cdot$&	1&	$\cdot$&	$\cdot$&	$\cdot$&	1&	$\cdot$&	1&	$\cdot$&	$\cdot$&	$\cdot$&	$\cdot$\\
1&	$\cdot$&	$\cdot$&	$\cdot$&	$\cdot$&	1&	$\cdot$&	$\cdot$&	$\cdot$&	1&	$\cdot$&	$\cdot$&	&	1&	$\cdot$&	$\cdot$&	$\cdot$&	1&	$\cdot$&	$\cdot$&	$\cdot$&	$\cdot$&	$\cdot$&	1&	$\cdot$\\
1&	1&	$\cdot$&	$\cdot$&	$\cdot$&	$\cdot$&	$\cdot$&	$\cdot$&	$\cdot$&	$\cdot$&	1&	$\cdot$&	&	$\cdot$&	1&	$\cdot$&	1&	$\cdot$&	$\cdot$&	$\cdot$&	$\cdot$&	1&	$\cdot$&	$\cdot$&	$\cdot$\\
$\cdot$&	1&	1&	$\cdot$&	$\cdot$&	$\cdot$&	$\cdot$&	$\cdot$&	$\cdot$&	$\cdot$&	$\cdot$&	1&	&	$\cdot$&	$\cdot$&	1&	$\cdot$&	$\cdot$&	$\cdot$&	$\cdot$&	$\cdot$&	$\cdot$&	$\cdot$&	$\cdot$&	$\cdot$\\ \cline{1-12} \cline{14-25}
\end{tabular}
\end{center}
\end{scriptsize}
\caption{Principal submatrices $M_S$ of biplanes of order 7.}\label{Fig1}
\end{figure}

Among biplanes of order 9 there is the structure ${\cal B}_{9e}$ with large automorphism group, $|Aut({\cal B}_{9e})|=80640$, admitting both symmetry and trace equal to the number of points. The parameters of arising designs ${\cal D}''(3)$ are 
\begin{eqnarray*}
2-(16,3,\lambda_i), \enspace \lambda_i=0,1,2.
\end{eqnarray*}
The tables on Figure \ref{Fig2} present submatrices $M''_S$ for ${\cal B}_{9e}$. These are incidence matrices of ${\cal D}''(3)$.

\begin{figure}[h!]
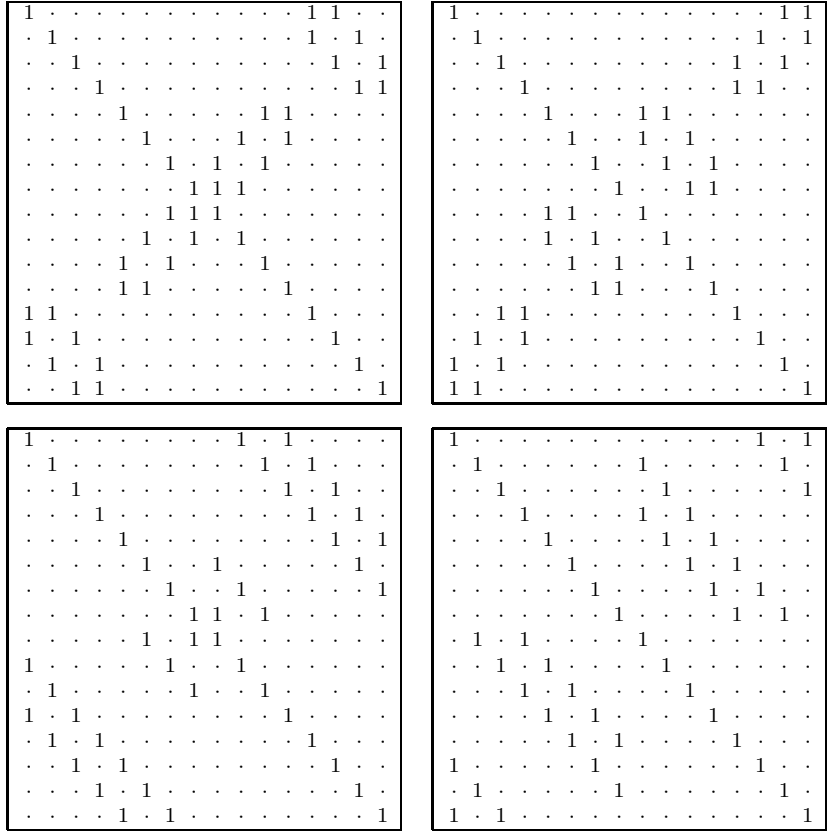

\begin{scriptsize}
\begin{center}

\begin{tabular}{c@{\hspace{0.55em}}c@{\hspace{0.55em}}|c@{\hspace{0.55em}}c@{\hspace{0.55em}}c@{\hspace{0.55em}}c@{\hspace{0.55em}}c@{\hspace{0.55em}}c@{\hspace{0.55em}}c@{\hspace{0.55em}}c@{\hspace{0.55em}}c@{\hspace{0.55em}}c@{\hspace{0.55em}}c@{\hspace{0.55em}}c@{\hspace{0.55em}}c@{\hspace{0.55em}}c@{\hspace{0.55em}}c@{\hspace{0.55em}}c@{\hspace{0.55em}}|c|c@{\hspace{0.55em}}c@{\hspace{0.55em}}c@{\hspace{0.55em}}c@{\hspace{0.55em}}c@{\hspace{0.55em}}c@{\hspace{0.55em}}c@{\hspace{0.55em}}c@{\hspace{0.55em}}c@{\hspace{0.55em}}c@{\hspace{0.55em}}c@{\hspace{0.55em}}c@{\hspace{0.55em}}c@{\hspace{0.55em}}c@{\hspace{0.55em}}c@{\hspace{0.55em}}c@{\hspace{0.55em}}|} \cline{3-18} \cline{20-35}
&&1&	$\cdot$&	$\cdot$&	$\cdot$&	$\cdot$&	$\cdot$&	$\cdot$&	$\cdot$&	$\cdot$&	$\cdot$&	$\cdot$&	$\cdot$&	1&	1&	$\cdot$&	$\cdot$&	&	1&	$\cdot$&	$\cdot$&	$\cdot$&	$\cdot$&	$\cdot$&	$\cdot$&	$\cdot$&	$\cdot$&	$\cdot$&	$\cdot$&	$\cdot$&	$\cdot$&	$\cdot$&	1&	1\\
&&$\cdot$&	1&	$\cdot$&	$\cdot$&	$\cdot$&	$\cdot$&	$\cdot$&	$\cdot$&	$\cdot$&	$\cdot$&	$\cdot$&	$\cdot$&	1&	$\cdot$&	1&	$\cdot$&	&	$\cdot$&	1&	$\cdot$&	$\cdot$&	$\cdot$&	$\cdot$&	$\cdot$&	$\cdot$&	$\cdot$&	$\cdot$&	$\cdot$&	$\cdot$&	$\cdot$&	1&	$\cdot$&	1\\
&&$\cdot$&	$\cdot$&	1&	$\cdot$&	$\cdot$&	$\cdot$&	$\cdot$&	$\cdot$&	$\cdot$&	$\cdot$&	$\cdot$&	$\cdot$&	$\cdot$&	1&	$\cdot$&	1&	&	$\cdot$&	$\cdot$&	1&	$\cdot$&	$\cdot$&	$\cdot$&	$\cdot$&	$\cdot$&	$\cdot$&	$\cdot$&	$\cdot$&	$\cdot$&	1&	$\cdot$&	1&	$\cdot$\\
&&$\cdot$&	$\cdot$&	$\cdot$&	1&	$\cdot$&	$\cdot$&	$\cdot$&	$\cdot$&	$\cdot$&	$\cdot$&	$\cdot$&	$\cdot$&	$\cdot$&	$\cdot$&	1&	1&	&	$\cdot$&	$\cdot$&	$\cdot$&	1&	$\cdot$&	$\cdot$&	$\cdot$&	$\cdot$&	$\cdot$&	$\cdot$&	$\cdot$&	$\cdot$&	1&	1&	$\cdot$&	$\cdot$\\
&&$\cdot$&	$\cdot$&	$\cdot$&	$\cdot$&	1&	$\cdot$&	$\cdot$&	$\cdot$&	$\cdot$&	$\cdot$&	1&	1&	$\cdot$&	$\cdot$&	$\cdot$&	$\cdot$&	&	$\cdot$&	$\cdot$&	$\cdot$&	$\cdot$&	1&	$\cdot$&	$\cdot$&	$\cdot$&	1&	1&	$\cdot$&	$\cdot$&	$\cdot$&	$\cdot$&	$\cdot$&	$\cdot$\\
&&$\cdot$&	$\cdot$&	$\cdot$&	$\cdot$&	$\cdot$&	1&	$\cdot$&	$\cdot$&	$\cdot$&	1&	$\cdot$&	1&	$\cdot$&	$\cdot$&	$\cdot$&	$\cdot$&	&	$\cdot$&	$\cdot$&	$\cdot$&	$\cdot$&	$\cdot$&	1&	$\cdot$&	$\cdot$&	1&	$\cdot$&	1&	$\cdot$&	$\cdot$&	$\cdot$&	$\cdot$&	$\cdot$\\
&&$\cdot$&	$\cdot$&	$\cdot$&	$\cdot$&	$\cdot$&	$\cdot$&	1&	$\cdot$&	1&	$\cdot$&	1&	$\cdot$&	$\cdot$&	$\cdot$&	$\cdot$&	$\cdot$&	&	$\cdot$&	$\cdot$&	$\cdot$&	$\cdot$&	$\cdot$&	$\cdot$&	1&	$\cdot$&	$\cdot$&	1&	$\cdot$&	1&	$\cdot$&	$\cdot$&	$\cdot$&	$\cdot$\\
&&$\cdot$&	$\cdot$&	$\cdot$&	$\cdot$&	$\cdot$&	$\cdot$&	$\cdot$&	1&	1&	1&	$\cdot$&	$\cdot$&	$\cdot$&	$\cdot$&	$\cdot$&	$\cdot$&	&	$\cdot$&	$\cdot$&	$\cdot$&	$\cdot$&	$\cdot$&	$\cdot$&	$\cdot$&	1&	$\cdot$&	$\cdot$&	1&	1&	$\cdot$&	$\cdot$&	$\cdot$&	$\cdot$\\
&&$\cdot$&	$\cdot$&	$\cdot$&	$\cdot$&	$\cdot$&	$\cdot$&	1&	1&	1&	$\cdot$&	$\cdot$&	$\cdot$&	$\cdot$&	$\cdot$&	$\cdot$&	$\cdot$&	&	$\cdot$&	$\cdot$&	$\cdot$&	$\cdot$&	1&	1&	$\cdot$&	$\cdot$&	1&	$\cdot$&	$\cdot$&	$\cdot$&	$\cdot$&	$\cdot$&	$\cdot$&	$\cdot$\\
&&$\cdot$&	$\cdot$&	$\cdot$&	$\cdot$&	$\cdot$&	1&	$\cdot$&	1&	$\cdot$&	1&	$\cdot$&	$\cdot$&	$\cdot$&	$\cdot$&	$\cdot$&	$\cdot$&	&	$\cdot$&	$\cdot$&	$\cdot$&	$\cdot$&	1&	$\cdot$&	1&	$\cdot$&	$\cdot$&	1&	$\cdot$&	$\cdot$&	$\cdot$&	$\cdot$&	$\cdot$&	$\cdot$\\
&&$\cdot$&	$\cdot$&	$\cdot$&	$\cdot$&	1&	$\cdot$&	1&	$\cdot$&	$\cdot$&	$\cdot$&	1&	$\cdot$&	$\cdot$&	$\cdot$&	$\cdot$&	$\cdot$&	&	$\cdot$&	$\cdot$&	$\cdot$&	$\cdot$&	$\cdot$&	1&	$\cdot$&	1&	$\cdot$&	$\cdot$&	1&	$\cdot$&	$\cdot$&	$\cdot$&	$\cdot$&	$\cdot$\\
&&$\cdot$&	$\cdot$&	$\cdot$&	$\cdot$&	1&	1&	$\cdot$&	$\cdot$&	$\cdot$&	$\cdot$&	$\cdot$&	1&	$\cdot$&	$\cdot$&	$\cdot$&	$\cdot$&	&	$\cdot$&	$\cdot$&	$\cdot$&	$\cdot$&	$\cdot$&	$\cdot$&	1&	1&	$\cdot$&	$\cdot$&	$\cdot$&	1&	$\cdot$&	$\cdot$&	$\cdot$&	$\cdot$\\
&&1&	1&	$\cdot$&	$\cdot$&	$\cdot$&	$\cdot$&	$\cdot$&	$\cdot$&	$\cdot$&	$\cdot$&	$\cdot$&	$\cdot$&	1&	$\cdot$&	$\cdot$&	$\cdot$&	&	$\cdot$&	$\cdot$&	1&	1&	$\cdot$&	$\cdot$&	$\cdot$&	$\cdot$&	$\cdot$&	$\cdot$&	$\cdot$&	$\cdot$&	1&	$\cdot$&	$\cdot$&	$\cdot$\\
&&1&	$\cdot$&	1&	$\cdot$&	$\cdot$&	$\cdot$&	$\cdot$&	$\cdot$&	$\cdot$&	$\cdot$&	$\cdot$&	$\cdot$&	$\cdot$&	1&	$\cdot$&	$\cdot$&	&	$\cdot$&	1&	$\cdot$&	1&	$\cdot$&	$\cdot$&	$\cdot$&	$\cdot$&	$\cdot$&	$\cdot$&	$\cdot$&	$\cdot$&	$\cdot$&	1&	$\cdot$&	$\cdot$\\
&&$\cdot$&	1&	$\cdot$&	1&	$\cdot$&	$\cdot$&	$\cdot$&	$\cdot$&	$\cdot$&	$\cdot$&	$\cdot$&	$\cdot$&	$\cdot$&	$\cdot$&	1&	$\cdot$&	&	1&	$\cdot$&	1&	$\cdot$&	$\cdot$&	$\cdot$&	$\cdot$&	$\cdot$&	$\cdot$&	$\cdot$&	$\cdot$&	$\cdot$&	$\cdot$&	$\cdot$&	1&	$\cdot$\\
&&$\cdot$&	$\cdot$&	1&	1&	$\cdot$&	$\cdot$&	$\cdot$&	$\cdot$&	$\cdot$&	$\cdot$&	$\cdot$&	$\cdot$&	$\cdot$&	$\cdot$&	$\cdot$&	1&	&	1&	1&	$\cdot$&	$\cdot$&	$\cdot$&	$\cdot$&	$\cdot$&	$\cdot$&	$\cdot$&	$\cdot$&	$\cdot$&	$\cdot$&	$\cdot$&	$\cdot$&	$\cdot$&	1\\ \cline{3-18} \cline{20-35}
\\ \cline{3-18} \cline{20-35}
&&1&	$\cdot$&	$\cdot$&	$\cdot$&	$\cdot$&	$\cdot$&	$\cdot$&	$\cdot$&	$\cdot$&	1&	$\cdot$&	1&	$\cdot$&	$\cdot$&	$\cdot$&	$\cdot$&	&	1&	$\cdot$&	$\cdot$&	$\cdot$&	$\cdot$&	$\cdot$&	$\cdot$&	$\cdot$&	$\cdot$&	$\cdot$&	$\cdot$&	$\cdot$&	$\cdot$&	1&	$\cdot$&	1\\
&&$\cdot$&	1&	$\cdot$&	$\cdot$&	$\cdot$&	$\cdot$&	$\cdot$&	$\cdot$&	$\cdot$&	$\cdot$&	1&	$\cdot$&	1&	$\cdot$&	$\cdot$&	$\cdot$&	&	$\cdot$&	1&	$\cdot$&	$\cdot$&	$\cdot$&	$\cdot$&	$\cdot$&	$\cdot$&	1&	$\cdot$&	$\cdot$&	$\cdot$&	$\cdot$&	$\cdot$&	1&	$\cdot$\\
&&$\cdot$&	$\cdot$&	1&	$\cdot$&	$\cdot$&	$\cdot$&	$\cdot$&	$\cdot$&	$\cdot$&	$\cdot$&	$\cdot$&	1&	$\cdot$&	1&	$\cdot$&	$\cdot$&	&	$\cdot$&	$\cdot$&	1&	$\cdot$&	$\cdot$&	$\cdot$&	$\cdot$&	$\cdot$&	$\cdot$&	1&	$\cdot$&	$\cdot$&	$\cdot$&	$\cdot$&	$\cdot$&	1\\
&&$\cdot$&	$\cdot$&	$\cdot$&	1&	$\cdot$&	$\cdot$&	$\cdot$&	$\cdot$&	$\cdot$&	$\cdot$&	$\cdot$&	$\cdot$&	1&	$\cdot$&	1&	$\cdot$&	&	$\cdot$&	$\cdot$&	$\cdot$&	1&	$\cdot$&	$\cdot$&	$\cdot$&	$\cdot$&	1&	$\cdot$&	1&	$\cdot$&	$\cdot$&	$\cdot$&	$\cdot$&	$\cdot$\\
&&$\cdot$&	$\cdot$&	$\cdot$&	$\cdot$&	1&	$\cdot$&	$\cdot$&	$\cdot$&	$\cdot$&	$\cdot$&	$\cdot$&	$\cdot$&	$\cdot$&	1&	$\cdot$&	1&	&	$\cdot$&	$\cdot$&	$\cdot$&	$\cdot$&	1&	$\cdot$&	$\cdot$&	$\cdot$&	$\cdot$&	1&	$\cdot$&	1&	$\cdot$&	$\cdot$&	$\cdot$&	$\cdot$\\
&&$\cdot$&	$\cdot$&	$\cdot$&	$\cdot$&	$\cdot$&	1&	$\cdot$&	$\cdot$&	1&	$\cdot$&	$\cdot$&	$\cdot$&	$\cdot$&	$\cdot$&	1&	$\cdot$&	&	$\cdot$&	$\cdot$&	$\cdot$&	$\cdot$&	$\cdot$&	1&	$\cdot$&	$\cdot$&	$\cdot$&	$\cdot$&	1&	$\cdot$&	1&	$\cdot$&	$\cdot$&	$\cdot$\\
&&$\cdot$&	$\cdot$&	$\cdot$&	$\cdot$&	$\cdot$&	$\cdot$&	1&	$\cdot$&	$\cdot$&	1&	$\cdot$&	$\cdot$&	$\cdot$&	$\cdot$&	$\cdot$&	1&	&	$\cdot$&	$\cdot$&	$\cdot$&	$\cdot$&	$\cdot$&	$\cdot$&	1&	$\cdot$&	$\cdot$&	$\cdot$&	$\cdot$&	1&	$\cdot$&	1&	$\cdot$&	$\cdot$\\
&&$\cdot$&	$\cdot$&	$\cdot$&	$\cdot$&	$\cdot$&	$\cdot$&	$\cdot$&	1&	1&	$\cdot$&	1&	$\cdot$&	$\cdot$&	$\cdot$&	$\cdot$&	$\cdot$&	&	$\cdot$&	$\cdot$&	$\cdot$&	$\cdot$&	$\cdot$&	$\cdot$&	$\cdot$&	1&	$\cdot$&	$\cdot$&	$\cdot$&	$\cdot$&	1&	$\cdot$&	1&	$\cdot$\\
&&$\cdot$&	$\cdot$&	$\cdot$&	$\cdot$&	$\cdot$&	1&	$\cdot$&	1&	1&	$\cdot$&	$\cdot$&	$\cdot$&	$\cdot$&	$\cdot$&	$\cdot$&	$\cdot$&	&	$\cdot$&	1&	$\cdot$&	1&	$\cdot$&	$\cdot$&	$\cdot$&	$\cdot$&	1&	$\cdot$&	$\cdot$&	$\cdot$&	$\cdot$&	$\cdot$&	$\cdot$&	$\cdot$\\
&&1&	$\cdot$&	$\cdot$&	$\cdot$&	$\cdot$&	$\cdot$&	1&	$\cdot$&	$\cdot$&	1&	$\cdot$&	$\cdot$&	$\cdot$&	$\cdot$&	$\cdot$&	$\cdot$&	&	$\cdot$&	$\cdot$&	1&	$\cdot$&	1&	$\cdot$&	$\cdot$&	$\cdot$&	$\cdot$&	1&	$\cdot$&	$\cdot$&	$\cdot$&	$\cdot$&	$\cdot$&	$\cdot$\\
&&$\cdot$&	1&	$\cdot$&	$\cdot$&	$\cdot$&	$\cdot$&	$\cdot$&	1&	$\cdot$&	$\cdot$&	1&	$\cdot$&	$\cdot$&	$\cdot$&	$\cdot$&	$\cdot$&	&	$\cdot$&	$\cdot$&	$\cdot$&	1&	$\cdot$&	1&	$\cdot$&	$\cdot$&	$\cdot$&	$\cdot$&	1&	$\cdot$&	$\cdot$&	$\cdot$&	$\cdot$&	$\cdot$\\
&&1&	$\cdot$&	1&	$\cdot$&	$\cdot$&	$\cdot$&	$\cdot$&	$\cdot$&	$\cdot$&	$\cdot$&	$\cdot$&	1&	$\cdot$&	$\cdot$&	$\cdot$&	$\cdot$&	&	$\cdot$&	$\cdot$&	$\cdot$&	$\cdot$&	1&	$\cdot$&	1&	$\cdot$&	$\cdot$&	$\cdot$&	$\cdot$&	1&	$\cdot$&	$\cdot$&	$\cdot$&	$\cdot$\\
&&$\cdot$&	1&	$\cdot$&	1&	$\cdot$&	$\cdot$&	$\cdot$&	$\cdot$&	$\cdot$&	$\cdot$&	$\cdot$&	$\cdot$&	1&	$\cdot$&	$\cdot$&	$\cdot$&	&	$\cdot$&	$\cdot$&	$\cdot$&	$\cdot$&	$\cdot$&	1&	$\cdot$&	1&	$\cdot$&	$\cdot$&	$\cdot$&	$\cdot$&	1&	$\cdot$&	$\cdot$&	$\cdot$\\
&&$\cdot$&	$\cdot$&	1&	$\cdot$&	1&	$\cdot$&	$\cdot$&	$\cdot$&	$\cdot$&	$\cdot$&	$\cdot$&	$\cdot$&	$\cdot$&	1&	$\cdot$&	$\cdot$&	&	1&	$\cdot$&	$\cdot$&	$\cdot$&	$\cdot$&	$\cdot$&	1&	$\cdot$&	$\cdot$&	$\cdot$&	$\cdot$&	$\cdot$&	$\cdot$&	1&	$\cdot$&	$\cdot$\\
&&$\cdot$&	$\cdot$&	$\cdot$&	1&	$\cdot$&	1&	$\cdot$&	$\cdot$&	$\cdot$&	$\cdot$&	$\cdot$&	$\cdot$&	$\cdot$&	$\cdot$&	1&	$\cdot$&	&	$\cdot$&	1&	$\cdot$&	$\cdot$&	$\cdot$&	$\cdot$&	$\cdot$&	1&	$\cdot$&	$\cdot$&	$\cdot$&	$\cdot$&	$\cdot$&	$\cdot$&	1&	$\cdot$\\
&&$\cdot$&	$\cdot$&	$\cdot$&	$\cdot$&	1&	$\cdot$&	1&	$\cdot$&	$\cdot$&	$\cdot$&	$\cdot$&	$\cdot$&	$\cdot$&	$\cdot$&	$\cdot$&	1&	&	1&	$\cdot$&	1&	$\cdot$&	$\cdot$&	$\cdot$&	$\cdot$&	$\cdot$&	$\cdot$&	$\cdot$&	$\cdot$&	$\cdot$&	$\cdot$&	$\cdot$&	$\cdot$&	1\\  \cline{3-18} \cline{20-35}
\end{tabular}

\end{center}
\end{scriptsize}
\caption{Symmetric partially balanced incomplete design with association scheme of three classes arisign from the biplane ${\cal B}_{9e}$, $|Aut({\cal B}_{9e})|=80640$.}\label{Fig2}
\end{figure}
In many cases a matrix $M''_S$ is symmetric in respect to the minor diagonal as well. Althought biplanes of order 7 do not allow symmetry of incidence matrices, equal or analogue patterns in these submatrices can be found within biplanes of both orders, 7 and 9. 

When say a matrix of type $L$, we mean either $L_n$ (that is {\it adjacency matrix} of a {\it path} with starting and ending vertex having {\it loop}) or $L_nC_n^{-}$.  As these tables show, matrices of type $L$ that serves in our proof later we found in the constracted representatives of the family (\ref{NewClass}). It is worth noting that bigger matrices of type $L$ are not admissible with these biplanes. This follows because of the constraints toward canonical part of a biplane's incidence matrix, with similar arguments as we used in Lemma \ref{Lem1}. Due to the same facts, $M_S$ of a biplane of order 11 possibly includes $L_5$ but no bigger $L$-type matrix. 

The next admissible biplane's order 14 (it is the open question if any biplane of this order exists), is the first one where $L$-matrix of maximal size (of order $n-3$) is admissible. More precisely, the matrix 
$$
\begin{bmatrix}
I_{13} & L_{13}C_{13}^{-}
\cr L_{13}C_{13}^{-} & I_{13}
\end{bmatrix}
$$
is admissible as $M_S$ for biplanes of this order. 

Let matrix $T_n$ of order $n$ be defined on the way that 1s are on the position as follows while the other entries are 0,
$$
\qquad
T_{n}=\begin{bmatrix}
 & 1 &... & 1 &
\cr   1&  &  & &1
\cr   \vdots&  &  & & \vdots
\cr   1&  &  & &1
\cr   & 1 & ... & 1&
\end{bmatrix}. 
$$

The first left table of Figure \ref{Fig2} representing ${\cal D}''(3)$ defines the scheme ${\cal A}''$. The next (0,1)-matrices $A_i$, $i \in \{0,1,2,3\}$ are associate matrices of ${\cal A}''$.
%Then for the matrix  ${\cal R}''$ representing the scheme  is presented by the next table:
%$$
%{\cal R}'' =
%\qquad
%\begin{bmatrix}
%T_4+J_4 & J_4 & J_4 & 2L_4C_4^{-}+J4
%\cr J_4 & T_4+J_4 & 2L_4& J_4
%\cr J_4 & 2L_4 & T_4+J4 & J_4
%\cr 2L_4C_4^{-}+J4 & J_4 & J_4 & T_4+J_4
%\end{bmatrix}. 
%$$

\begin{eqnarray*}
A_0&=&I_{16}\\
A_1&=&\begin{bmatrix}
C_4^{-} & J_4 & J_4 & L_4
\cr J_4 & C_4^{-} &  L_4C_4^{-} & J_4
\cr J_4 &  L_4C_4^{-} & C_4^{-}  & J_4
\cr L_4 &  J_4 & J_4  & C_4^{-}
\end{bmatrix}\\
A_2&=&\begin{bmatrix}
T_4 & 0_4 & 0_4 & 0_4
\cr  0_4 & T_4 & 0_4 & 0_4
\cr  0_4 &  0_4 & T_4 & 0_4
\cr  0_4 &  0_4 &  0_4  & T_4  
\end{bmatrix}\\
A_3&=&\begin{bmatrix}
0_4 & 0_4 & 0_4 & L_4C_4^{-}
\cr  0_4 & 0_4 &  L_4 & 0_4
\cr  0_4 &   L_4 & 0_4 & 0_4
\cr   L_4C_4^{-} &  0_4 &  0_4  & 0_4  
\end{bmatrix}
\end{eqnarray*}

\begin{remark} It is not known if there is infinitely or finitely many biplanes. The family of partially balanced incomplete design with association scheme (\ref{NewClass}) is either finite or infinite depending on this answer for biplanes with the two declared properties. 
\end{remark}

\begin{corollary} There is an infinite class of partially balanced incomplete designs with association scheme of three classes, with parameters
\begin{eqnarray} \label{InfClass}
2-(n,3,\lambda_i), \enspace \lambda_i=0,1,2,
\end{eqnarray}
for $n \ge 6$.
\end{corollary}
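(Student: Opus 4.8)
The plan is to detach the family (\ref{NewClass}) from the (open) existence question for biplanes. The key observation is that the matrix $D_n$ that appears at the end of the proof of Theorem \ref{Thm2}, together with its variant obtained by replacing $L_n$ with $L_nC_n^{-}$, is a concrete $(0,1)$-matrix that is well defined for \emph{every} integer $n\ge 3$, regardless of whether a biplane of the corresponding order exists. Moreover, the verification that $D_n$ carries a partially balanced incomplete design with three associate classes never invoked the ambient biplane: it used only the matrix-theoretic content of Lemmas \ref{Lem1} and \ref{Lem2}, namely constant row and column sum $3$ and the scalar-product profile ``two rows meeting in $1$, two rows meeting in $2$, and the remaining rows meeting in $0$''. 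I would therefore take the matrices $D_n$ directly as members of the family, instead of extracting them as principal submatrices $M_S$ of some biplane.

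First I would record the identity $D_m=I_{2m}+\mathrm{Adj}(C_{2m})$; that is, $D_m$ is the cycle on $2m$ vertices with a loop adjoined at every vertex, which one checks by tracing the two $L_m$ blocks. This presentation makes the symmetry, the unit diagonal, and the row sum $3$ transparent, and for even orders $n=2m\ge 6$ it yields a design with parameters $2\textrm{-}(n,3,\lambda_i)$, $\lambda_i=0,1,2$, by the argument of Theorem \ref{Thm2}. To reach the odd orders, and hence all $n\ge 6$, I would use the same prescription for an odd cycle: the matrix $I_n+\mathrm{Adj}(C_n)$ is symmetric with unit diagonal and constant row and column sum $3$, and each of its rows meets exactly two other rows in scalar product $2$ (the two cyclic neighbours), exactly two in scalar product $1$ (the two vertices at distance $2$), and all remaining $n-5$ rows in scalar product $0$. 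Hence for every $n\ge 6$ one obtains $n_2=n_3=2$, $n_1=n-5\ge 1$, $\lambda_1=0,\lambda_2=1,\lambda_3=2$, and relation (\ref{RelPBD}) holds with $k=r=3$; these are precisely the parameters of (\ref{InfClass}).

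The parameters being available for every integer $n\ge 6$, and the construction being wholly independent of biplane existence, the resulting class is infinite, which is the assertion. I expect the genuine obstacle to lie not in the arithmetic of the $n_i$ and $\lambda_i$, which is routine, but in confirming that the three relations form a symmetric association scheme \emph{uniformly in $n$}, i.e. that the intersection numbers $p^{h}_{ij}$ of condition \emph{iv)} are well defined and the span of $A_0,\dots,A_3$ is closed under multiplication. For the sporadic biplane-derived members this closure was inherited from the ambient structure, whereas here it must be verified intrinsically, and it is delicate precisely for the relation of scalar product $0$: on $I_n+\mathrm{Adj}(C_n)$ this relation lumps together several cycle-distances once the cycle has diameter exceeding $3$, and a union of distance relations need not have constant intersection numbers. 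The route I would take is to exploit the translation-invariant (circulant) structure: each $A_i$ is then a circulant, the Bose--Mesner algebra is the commutative algebra they generate, and closure reduces to a finite system of coset conditions in $\mathbb{Z}_n$. Establishing that these conditions can be met for all $n$ — or, failing that, identifying the correct refinement of the relations so that they are met — is the step that must be carried out with care, and is where the weight of the argument rests.
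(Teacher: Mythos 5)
Your construction is, at bottom, the same as the paper's: the paper's entire proof of this corollary is the single sentence that for every $n\ge 6$ the matrix $D_n$ is an incidence matrix of a representative, so both you and the paper detach the family from biplane existence by exhibiting the loop-decorated cycle directly. You add two genuine things. First, your identification of $D_m$ as the identity plus the adjacency matrix of a cycle on $2m$ vertices is correct, and it exposes a real defect in the paper's one-liner: $D_n$ as defined has order $2n$, so the paper's proof literally realizes only an even number of points, while your odd-cycle variant (identity plus the adjacency matrix of the $n$-cycle) covers all $n\ge 6$. Your intersection profile is also correct: each row meets two others in scalar product $2$, two in $1$, and the remaining $n-5$ in $0$, giving $n_1=n-5$, $n_2=n_3=2$, $\lambda_1=0$, $\lambda_2=1$, $\lambda_3=2$ and relation (\ref{RelPBD}) with $k=r=3$. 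In this respect your argument covers the stated claim more faithfully than the paper does.

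The problem is the step you explicitly defer to the end, and here your suspicion is not merely justified but decisive: under the definition of association scheme in Section \ref{Sec1}, that step cannot be carried out for any $n\ge 8$. Take $R_3=$ pairs at cycle-distance $1$ ($\lambda=2$), $R_2=$ distance $2$ ($\lambda=1$), $R_1=$ distance $\ge 3$ ($\lambda=0$). For a pair $(x,y)$ at distance exactly $3$ there is exactly one $z$ with $(x,z)\in R_2$ and $(z,y)\in R_3$, namely the geodesic vertex at distance $2$ from $x$; but for a pair at distance $\ge 4$ (such pairs exist once $n\ge 8$) both neighbours of $y$ lie at distance $\ge 3$ from $x$, so there is no such $z$. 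Hence $p_{23}^{1}$ is not well defined, the fusion of the cycle's distance classes $\{3,4,\dots\}$ is not an associate class, and no repair with only three classes exists, since the genuine scheme on the $n$-cycle is its distance scheme with $\lfloor n/2\rfloor$ classes. The corollary survives only under the paper's own Definition 2 of a partially balanced incomplete design, which requires constancy of the $n_i$ and $\lambda_i$ but never mentions the intersection numbers $p_{ij}^h$; on that weak reading your parameter verification already finishes the proof and your closing programme of circulant closure conditions is unnecessary, while on the strict reading of condition \emph{iv)} it is unachievable. The paper's proof has exactly the same gap; it simply never acknowledges it, whereas you located it correctly but left it open.
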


\begin{proof}
For every $n\ge 6$ there is at least one representative of this class, the matrix $D_n$ being its incidence matrix. 
\end{proof}

Note that he family \ref{NewClass} is a subclass of class \ref{InfClass}.

It is worth to remind that biplanes are structures similar to finite projective planes. While within projective planes any two lines intersect in one point, with biplanes two lines intersect in two points. Having in mind that certain discrete structures arise form projective planes, no wonder that the same is case for biplanes. Results presented here additinally confirm that biplanes as very regular structures. We showed that biplanes with the two described properties include partially balanced designs as well as association schemes, that further form vector spaces and algebras.

\section*{Acknowledgement}

Author is thankful to Professor Dragutin Svrtan form the University of Zagreb for advices on this subject, especially related to Lemma \ref{Lem1}.

\end{document}